\def\be{\begin{eqnarray}}
\def\ee{\end{eqnarray}}
\def\ben{\begin{eqnarray*}}
\def\een{\end{eqnarray*}}
\newcommand{\RL}{{\mathbb R}}
\newcommand{\Phatn}{\mbox{$\hat{P}_n$}}
\newcommand{\Pp} {
\mathbb{P}
}
\newcommand{\IND}{{\mathbb I}}
\newcommand{\What}{\widehat{W}}
\newcommand{\leqb}{\mbox{$ \;\stackrel{(b)}{\leq}\; $}}
\newcommand{\leqc}{\mbox{$ \;\stackrel{(c)}{\leq}\; $}}
\newcommand{\leqd}{\mbox{$ \;\stackrel{(d)}{\leq}\; $}}
\newcommand{\eqa}{\mbox{$ \;\stackrel{(a)}{=}\; $}}
\begin{document}

\title*{Information in probability:\\ Another
information-theoretic proof of a finite de Finetti theorem}
\titlerunning{Information in probability} 
% for an abbreviated version of
% your contribution title if the original one is too long
\author{Lampros Gavalakis and Ioannis Kontoyiannis}
% Use \authorrunning{Short Title} for an abbreviated version of
% your contribution title if the original one is too long
\institute{Lampros Gavalakis \at 
Department of Engineering, University of Cambridge, Trumpington Street,
Cambridge CB2 1PZ, U.K. \email{lg560@cam.ac.uk}
\and Ioannis Kontoyiannis \at 
Statistical Laboratory, DPMMS, University of Cambridge, 
Centre for Mathematical Sciences, Wilberforce Road,
Cambridge CB3 0WB, U.K. \email{yiannis@maths.cam.ac.uk}}
%
% Use the package "url.sty" to avoid
% problems with special characters
% used in your e-mail or web address
%
\maketitle

\abstract{We recall some of the history of the information-theoretic approach 
to deriving core results in probability theory and 
indicate parts of the recent resurgence of interest in this area 
with current progress along several interesting directions. Then
we give a new information-theoretic proof 
of a finite version of de Finetti's classical representation theorem
for finite-valued random variables.
We derive an upper bound on the relative entropy between 
the distribution of the first $k$ in a sequence of $n$ exchangeable 
random variables, and an appropriate mixture over product distributions.
The mixing measure is characterised as the law of the 
empirical measure of the original sequence,
and de Finetti's result is recovered 
as a corollary.
The proof is nicely motivated by the Gibbs conditioning principle
in connection with statistical mechanics, and it follows along
an appealing sequence of steps. The technical estimates required
for these steps are obtained via the use of a collection
of combinatorial tools known within
information theory as  `the method of types.'}

\section{Entropy and information in probability}
%%%%%%%%%%%%%%%%%%%%%%%%%%%%%%%%%%%%%%%%%%%%%%%%%%%%%%%%%%%%%%%%%%%%%%%%

Shannon's landmark 1948 paper~\cite{shannon:48} founded the field 
of information theory and ignited the fuse that led to much of the 
subsequent explosive development of communications theory and 
engineering in the 20th century. At the same time, it also led to a
wave of applications of information theory to numerous other
branches of science. Of those, some, e.g.\ those in bioinformatics
and neuroscience, were successful, while some others,
despite Shannon's ``bandwagon'' warning~\cite{shannon:56},
much less so.

Within mathematics, information-theoretic ideas have had a major
impact along several directions, perhaps most notably (although certainly
not exclusively) in connection with probability theory. For our
present purposes, the most relevant line of work is based on the 
idea of utilising information-theoretic tools and ideas in order
to prove core probabilistic results. Over the past 55 years,
a great number of such proofs have appeared.
These are often accompanied by new interpretations
and rich intuition, thus providing new ways of understanding 
why fundamental probabilistic theorems are true,
and sometimes also giving stronger versions of the original results. 

In the rest of this introduction we describe some of the
main landmarks along this path, and we indicate 
directions of current and likely near-future activity.
This brief survey is necessarily incomplete and 
biased, due to our own subjective taste and bounded knowledge.
Then in Section~\ref{sectiondefinetti} we state and prove
a new finite version of de Finetti's classical
representation theorem for finite-valued
exchangeable random variables.

The first appearance of information-theoretic ideas in the
proof of a genuinely probabilistic result was
in 1958, when H{\'a}jek~\cite{hajek:58a,hajek:58b} proved that the
laws $\mu$ and $\nu$ of any two Gaussian processes are 
either absolutely continuous with respect to each other, or singular. 
H{\'a}jek exploited the implications of $D(\mu\|\nu) + D(\nu\|\mu)$ 
being finite or infinite, where $D(\mu\|\nu)$ denotes 
the {\em relative entropy}
or {\em Kullback-Leibler divergence} 
between $\mu$ and $\nu$,
\be
D(\mu\|\nu):=\begin{cases}
\int \log\frac{d\mu}{d\nu} \,d\mu,&\mbox{if}\;\frac{d\mu}{d\nu}\;\mbox{exists}\\
+\infty,&\mbox{otherwise.}
\end{cases}
\label{eq:Ddef}
\ee
[Throughout, log denotes the natural logarithm.]
In the same year, Kolmogorov~\cite{kolmogorov:58} introduced
\textit{entropy} in ergodic theory. He provided a way to calculate the 
entropy of a transformation to conclude that Bernoulli shifts of different 
entropies are not metrically isomorphic.
The importance of entropy in ergodic theory was also highlighted more 
than a decade later, 
when Ornstein~\cite{ornstein:70a,ornstein:70b,ornstein:70c} proved 
that Bernoulli 
shifts with the same entropy are necessarily isomorphic.

The following year, 1959, Linnik~\cite{linnik:59} gave an information-theoretic
proof of the central limit theorem (CLT), showing that the law of the 
standardised sum
$S_n=(1/\sigma\sqrt{n})\sum_{i=1}^nX_i$
of $n$ independent and identically 
distributed
(i.i.d.) 
random variables $X_1,\ldots,X_n$ 
with variance $\sigma^2$ (or, more generally, 
of independent random variables satisfying the Lindeberg condition)
converges in distribution to a Gaussian. Linnik's connection between
the CLT and information-theoretic ideas was the first in a long
series of works, along a path that remains active until today. 
Indeed, in a sequence of papers, including works by Shimizu~\cite{shimizu:75},
Brown~\cite{brown:82}, 
Barron~\cite{barron:clt},
Johnson~\cite{johnson:00},
Artstein et al.~\cite{artstein:04},
Tulino and Verd{\'u}~\cite{tulino:06},
and Madiman and Barron~\cite{madiman:07},
it was shown that the {\em differential entropy}
$h(S_n)$ of the standardised sums
in fact {\em increases} with $n$,
and its limiting value is the
entropy $h(Z)$ of the standard
Gaussian $Z\sim N(0,1)$. 
This monotonic convergence in combination 
with the fact that the Gaussian has maximum
entropy among all random variables with 
variance~$\sigma^2$, presents an appealing
analogy between the CLT and the second
law of thermodynamics.

In the above discussion, the differential 
entropy of a continuous random variable
$X$ with density $f$ is given by
$h(X)=h(f)=-\int f\log f$, and the relative
entropy between two probability measures
$\mu,\nu$
on $\RL$ with densities $f,g$ is
$D(\mu\|\nu)=D(f\|g)=\int f\log (f/g)$.
A simple computation shows that the
``entropic CLT'' just described can 
equivalently be stated as,
$D(f_n\|\phi)\downarrow 0$ as $n\to\infty$,
where $f_n$ is the density of $S_n$
and $\phi$ the standard normal density.
This convergence in the sense of relative 
entropy implies convergence in $L^1$:
{\em Pinsker's inequality},
established by
Csisz{\'a}r~\cite{csiszar:67}, Kullback~\cite{kullback:67} 
and Kemperman~\cite{kemperman:69},
states that:
\begin{equation} \label{pinsker}
D(\mu\|\nu) \geq \frac{1}{2\log{2}}\|\mu-\nu\|_1^2.
\end{equation}

Subsequent work along these lines includes
Carlen and Soffer's dynamical systems approach~\cite{carlen:91}, 
Johnson's convergence to Haar measure
on compact groups~\cite{johnson:00b},
Johnson and Barron's rates of convergence in the
entropic CLT~\cite{johnson-barron:04},
Bubeck and Ganguly's
entropic CLT for Wishart random matrices~\cite{bubeck:18},
and, most recently, an information-theoretic
CLT for discrete random variables~\cite{gavalakis-arxiv:clt}.

A year after Linnik's paper made
the first entropy-CLT connection, in 1960, 
R{\'e}nyi~\cite{renyi:61} examined
the convergence of Markov chains to equilibrium from
an information-theoretic point of view, thus initiating
another path of information-theoretic investigation
in probability. R{\'e}nyi showed that the 
relative entropy $D(P_n\|\pi)$ between the time-$n$ distribution
$P_n$ of a finite-state chain with an all-positive transition
matrix and its unique invariant distribution $\pi$,
decreases to zero as $n\to\infty$.
Similar and slightly more general results were established 
independently by Csisz{\'a}r~\cite{csiszar:63} in
1963, who also employed R{\'e}nyi's notion of {\em $f$-divergence},
an important generalisation of relative entropy.
In the same year, Kendall~\cite{kendall:63} extended 
R{\'e}nyi's techniques and results, to include
certain countable state space chains. 
A significant advance came with Fritz's 1973 work~\cite{fritz:73}, 
where he studied the 
asymptotic behavior of reversible Markov kernels
and established their weak convergence
to equilibrium. Barron in 2000~\cite{barron:isit00}
extended Fritz's result to convergence in relative 
entropy, 
and in 2009 Harremo{\"e}s and Holst~\cite{harremoes:09} 
used ideas related to information projections 
to further extend and generalise those
earlier results.

The problem of Poisson approximation
and convergence 
was first examined through the lens
of information theory
around 20 years ago, 
leading to a development 
analogous to that of the entropic CLT.
Harremo\"{e}s in 2001 identified the Poisson as the maximum
entropy distribution among all laws that arise from sums
of independent Bernoulli random variables with a fixed 
mean~\cite{harremoes:01}. This characterisation was extended in 
2007 by Johnson~\cite{johnson:07} to the class of ultra 
log-concave laws on the nonnegative integers.
Meanwhile, in 2005 Kontoyiannis et al.~\cite{konto-H-J:05} 
derived convergence results and nonasymptotic
Poisson approximation bounds using entropy-theoretic 
methods. Interestingly,
some of those results were based, in part, on
a discrete modified logarithmic Sobolev inequality
for the entropy established by 
Bobkov and Ledoux~\cite{bobkov-ledoux:98}.
In a related direction, 
Harremo\"{e}s et al.~\cite{harremoesJK:07,konto-H-J:10}
obtained Poisson approximation results under the 
thinning operation.

A similar program was carried out in the case of compound Poisson
approximation. Compound Poisson laws on the integers
were again given a natural
maximum-entropy interpretation by Johnson et al.~\cite{jKm:13}
and Yu~\cite{yu:09}, and compound Poisson approximation
bounds and convergence results were established
via information-theoretic techniques by
Madiman et al.~\cite{madimanJK:07}
and Barbour et al.~\cite{Kcompound:10}.
Interestingly, in some cases the resulting nonasymptotic bounds 
give the best results to date.

\smallskip

\noindent
{\bf The method of types and large deviations. }
Suppose $\{X_n\}$ are i.i.d.\ random variables
with common probability mass function (PMF) $Q$
on a finite alphabet $A$ of size $m=|A|$.
The {\em type} $\Phatn$ of a string $x_1^n=(x_1,\ldots,x_n)\in A^n$
is simply the empirical PMF induced by $x_1^n$ on $A$.
Let ${\cal P}_n$ denote the collection of all {\em $n$-types}
on $A$, namely, all PMFs that arise as types of 
strings of length~$n$. Then, e.g., we have the obvious 
bound, 
\be
|{\cal P}_n|\leq (n+1)^m,
\label{eq:ntypes}
\ee
and direct computation also shows that,
for any $x_1^n\in A^n$,
\be
Q^n(x_1^n)=e^{-n[H(\hat{P}_n)+D(\hat{P}_n\|Q)]}.
\label{eq:exact}
\ee
Here, $H(P):=-\sum_{x\in A} P(x)\log P(x)$ is the 
(discrete Shannon) {\em entropy} of a PMF $P$ on $A$,
and the definition~(\ref{eq:Ddef}) of
the relative entropy $D(P\|Q)$ between two PMFs $P,Q$ on the
same discrete alphabet becomes
$D(P\|Q)=\sum_{x\in A}P(x)\log [P(x)/Q(x)]$.
Slightly more involved calculations
lead to interesting and useful bounds. For example,
for an $n$-type $P$,
let $T(P)$ denote the {\em type class} of~$P$, 
consisting of all $x_1^n\in A^n$ with type $P$.
Then the cardinality and probability of $T(P)$
satisfy,
\be
(n+1)^{-m}
e^{nH(P)}
&\leq& 
	|T(P)| \;\leq\;e^{nH(P)}
	\label{eq:sizesimple}\\
(n+1)^{-m}
e^{-nD(P\|Q)}
&\leq& 
	Q^n(T(P))\; \leq\;e^{-nD(P\|Q)}.
	\label{eq:probsimple}
\ee

The method of types is a collection of combinatorial
estimates for probabilities associated with discrete i.i.d.\
random variables and memoryless channels,
of which the examples in (\ref{eq:ntypes})--(\ref{eq:probsimple}) 
above are the starting point.
Based in part on preliminary ideas of Wolfowitz~\cite{wolfowitz:61book},
the method of types was
fully developed in 1981 by 
Csisz{\'a}r and K{\"o}rner~\cite{csiszar:book}. 
As described in Csisz{\'a}r's review~\cite{csiszar:98},
the method of types has been employed very
widely and with great success
in numerous information-theoretic problems arising 
from different communication-theoretic scenarios. 

Based in part on the method of types, 
and also building on ideas from related
work by Groenebook et at.~\cite{GOR:79},
Csisz{\'a}r was able to
establish a series of important results in large deviations.
In 1975~\cite{csiszar:75} he identified
the exponent in Sanov's theorem~\cite{sanov:57} 
as an extremum of relative entropies, and 
in 1984~\cite{csiszar:84} he proved a general,
strong version of Sanov's theorem,
by a combination of the method of types,
discretisation arguments, and a general
Pythagorean inequality for the relative 
entropy established by Tops{\o}e~\cite{topsoe:79}.
He also gave a simpler proof along the same
lines in his
2006 paper~\cite{csiszar:06}.

Moreover, in the same paper~\cite{csiszar:84} 
Csisz{\'a}r
established a version
of the {\em Gibbs conditioning principle} 
(also know as the {\em conditional limit theorem})
using the same tools.
This was further
extended by Csisz{\'a}r et al.\ in 1987~\cite{csiszar-cover-choi:87}
to the case of Markov conditioning, 
and by Algoet et al.\ in 1992~\cite{algoet:92}
to Markov types.

The method of types and the Gibbs conditioning principle
will both play an important role in our proof of the finite
de Finetti theorem in Section~\ref{sectiondefinetti}.

\smallskip

\noindent
{\bf Exchangeability. }
Suppose $\{X_n\}$ are i.i.d.\ random variables,
and let $\mathcal{E}$ denote the {\em exchangeable}
$\sigma$-algebra, that is, the 
sub-$\sigma$-algebra 
of $\sigma(\{X_n\})$ that consisting of those
events that are invariant under finite permutations 
of the indices in the sequence $\{X_n\}$.
In 2000,
O'Connell~\cite{oconnell:TR} gave a beautiful,
elementary 
information-theoretic proof the Hewitt-Savage 0-1 
law~\cite{hewitt-savage:55}: $\mathcal{E}$ 
is trivial, in that all events in ${\cal E}$ have
probability either zero ore one.

Another aspect of exchangeability comes up in connection
with de Finetti's theorem.
Let $\{X_n\}$ be an exchangeable sequence of random
variables with values in the same finite alphabet $A$.
Here, exchangeability means that, for any $n$ and any
permutation $\pi$ on $\{1,2,\dots,n\}$, the 
distribution of the random variables
$(X_{\pi(1)},X_{\pi(2)},\ldots,X_{\pi(n)})$
is the same as that of 
$(X_1,X_2,\ldots,X_n)$.
De Finetti's theorem 
\cite{definetti:31,definetti:37} 
states that $\{X_n\}$ is exchangeable
if and only if it is a mixture of 
i.i.d.\ sequences, that is, 
if and only if there is a measure 
$\bar{\mu}$ on the simplex ${\cal P}$ 
of probability distributions on $A$,
such that, for any $k\geq 1$ and any
$x_1^k=(x_1,\ldots,x_k)\in A^k$,
\be
\mathbb{P}(X_1^k=x_1^k)
=M_{\bar{\mu},k}(x_1^k):=\int_{\cal P} Q^k(x_1^k)d\bar{\mu}(Q).
\label{eq:deF}
\ee

De Finetti's theorem plays an 
important role in the foundations of subjective
probability and Bayesian statistics, see, e.g.,
the discussions in~\cite{diaconis:77,bayarri:04}.
But arguments about its practical relevance are
limited by the fact that, as is 
well known~\cite{diaconis-freedman:80b},
the representation~(\ref{eq:deF}) fails
in general if it is only assumed that 
a finite collection of random variables
$(X_1,\ldots,X_n)$ is exchangeable for 
some fixed $n$.
Nevertheless, approximate versions of~(\ref{eq:deF})
remain valid in this 
case~\cite{diaconis:77,diaconis-freedman:80b}.
Such a `finite' version of de Finetti's theorem
for binary random variables
was recently established in~\cite{gavalakis:21},
using information-theoretic ideas:
It was shown that there is a mixing
measure $\mu$ on ${\cal P}$ such that,
for any $1\leq k\leq n$,
the distribution $P_k$ of $(X_1,\ldots,X_k)$
is close to $M_{\mu,k}$ in the precise
sense that:
\begin{equation} 
\label{firstdefinetti}
D(P_k \| M_{\mu,k}) \leq \frac{5k^2\log{n}}{n-k}.
\end{equation}
A different information-theoretic proof of a different
finite version of de Finetti's theorem is given
in Section~\ref{sectiondefinetti}.

\smallskip

\noindent
{\bf Further connections. }
There are numerous other directions
along which information-theoretic methods have been 
employed to establish either known or new probabilistic
results. We briefly mention only a few more from the long list 
of relevant works, some of which go beyond probability 
theory.  
The interested reader may also consult
Barron's reviews of information-theoretic proofs 
and connections with statistics and 
learning~\cite{barron:97,barron:isit00},
Csisz{\'a}r's 
review of information-theoretic 
methods in probability~\cite{csiszar:97},
and Johnson's text~\cite{johnson:book}.

A natural and powerful connection has been drawn between 
information theory and concentration of measure inequalities,
through what has come to be known
as the {\em entropy method}. Often attributed 
Herbst~\cite{davies-simon:84},
the entropy method was primarily developed by 
Ledoux~\cite{ledoux:96,ledoux:97,ledoux:book}.
Marton's 1996 work~\cite{marton:96} 
had an early and 
significant influence in this direction as well. Entropy 
also appears naturally
in connection with related work on
transportation theory~\cite{bobkov:99,villani:book}.
Book-length accounts of measure concentration
and related inequalities, including the entropy
method, are given in~\cite{boucheron:book} and~\cite{raginsky:book}.

A fascinating and multifaceted series of connections
between information-theoretic ideas and functional
inequalities started with Shannon's entropy power inequality
(EPI), stated in his original 1948 
paper~\cite{shannon:48} and later proved
by Stam~\cite{stam:59} and Blachman~\cite{blachman:65}.
Much of the relevant literature up to 1991 is summarised in 
Dembo et al.'s review~\cite{dembo-cover-thomas:91},
including the connection with Gross' celebrated
Gaussian logarithmic Sobolev inequality~\cite{gross:75}.
This paper also
contains an early discussion of the strong
ties
between entropy inequalities and
high-dimensional convex geometry,
starting with 
Costa and Cover’s 1984 observation~\cite{costa-cover:84} 
that the 
Brunn-Minkowski inequality can be viewed as a special
case of a generalised EPI.

Building on the technical ideas of Stam and Blachman,
Bakry and {\'E}mery in a very 
influential 1985 paper~\cite{bakry-emery:85}
derived an important representation of the 
derivative of the relative entropy
$D(P_t\|Q_t)$ of the time-$t$ distributions
of a diffusion with different initial conditions.
Under appropriate assumptions, strong connections
were established with logarithmic Sobolev
inequalities, generalising the earlier connection
between the EPI and Gross' Gaussian logarithmic
Sobolev inequality, and facilitating the study
of the long-term behaviour of the underlying
diffusion. An important observation,
independently re-discovered by Barron \cite{barron:97},
is that this derivative can be expressed as a
``relative'' Fisher information, which also admits
an interpretation as a minimum mean squared error.
This interpretation had been promoted earlier in 
work by Brown, see e.g.~\cite{brown:86}, 
and it was re-framed in more information theoretic
terms by Guo et al. in 2005~\cite{guo:05},
leading to a variety of subsequent developments.

More recently, a remarkable equivalence between the subadditivity 
property of entropy and the classical Brascamp-Lieb inequality 
was pointed out 
by Carlen and Cordero-Erausquin~\cite{carlen:09},
and a unified information-theoretic treatment
was given by Liu et al.~\cite{courtade:16}.
In yet another direction, Tao in 2010~\cite{tao:10} 
developed a series of discrete entropy inequalities
motivated by sumset and inverse sumset bounds
in additive combinatorics, also leading to a discrete
version of the EPI. More recent work in this direction
includes~\cite{KM:14,KM:16}.

Finally, we mention that a natural analog of the entropy
in free probability was introduced in a series of papers by
Voiculescu~\cite{voiculescu:I,voiculescu:II,voiculescu:III,voiculescu:IV},
where several properties of the free entropy are established,
including a free version of the EPI.
In related work, convergence results to maximum free entropy 
distributions is considered by Johnson in~\cite[Chapter 8]{johnson:book}.

\section{Information-theoretic proof of a finite 
de Finetti theorem} 
%%%%%%%%%%%%%%%%%%%%%%%%%%%%%%%%%%%%%%%%%%%%%%%%%%%%%%%%%%%%%%%%%%%%%%%%
\label{sectiondefinetti}

Suppose $X_1^n=(X_1,\ldots,X_n)$, for some fixed $n$,
are exchangeable, discrete random variables,
with values in a finite alphabet
$A$ of $m=|A|$ elements. Let $\hat{P}_{X_1^n}$ denote the 
(random) type of $X_1^n$, and let the measure $\mu=\mu_n$ denote 
the law of $\hat{P}_{X_1^n}$ on the probability simplex ${\cal P}$.
In this section we provide an information-theoretic proof of
the following:

\begin{theorem}[Finite de Finetti theorem]
\label{thm}
For any $1\leq k\leq n$,
let $P_k$ denote the distribution of 
$X_1^k=(X_1,\ldots,X_k)$ and
$M_{\mu,k}$ denote the mixture-of-i.i.d.s:
$$
M_{\mu,k}(x_1^k)=\int_{\cal P} Q^k(x_1^k)d\mu(Q),
\qquad x_1^k\in A^k.$$
For any $1\leq k\leq (n/100)^{1/3}$, we have,
\begin{equation} 
\label{finalbound}
D(P_k\|M_{\mu,k}) \leq \epsilon(n,k):=
2\delta 
+ke^{- \frac{n}{k}\delta}
\Bigl(\frac{n}{k}+1\Bigr)^{2m^k}\log n,
\end{equation}
with
$\alpha=\alpha_{n,k}= \big[\frac{2k}{\sqrt{n}}\big(\frac{1+2k}{\sqrt{n}}
+ 1
\big)\big]^{1/2}$
and
$\delta=\delta_{n,k} =\alpha
\log(m^k/\alpha)$.
\end{theorem}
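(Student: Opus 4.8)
The plan is to exploit the fact that, for exchangeable $X_1^n$, conditioning on the type pins down the law of the prefix, and then to invoke the Gibbs conditioning principle---made quantitative through the method of types---to compare that conditional law with the corresponding product. First I would record the consequence of exchangeability: conditioned on $\{\hat P_{X_1^n}=P\}$, all strings in the type class $T(P)$ are equally likely, so the conditional law of $X_1^k$ is exactly the first-$k$-coordinate marginal of the uniform distribution on $T(P)$; call it $P_k^P$. Since $\mu$ is by definition the law of $\hat P_{X_1^n}$ and is supported on $n$-types, this gives two mixture representations over a common mixing measure, $P_k=\sum_P\mu(P)\,P_k^P$ and $M_{\mu,k}=\sum_P\mu(P)\,P^k$. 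Joint convexity of relative entropy would bound $D(P_k\|M_{\mu,k})$ by $\sum_P\mu(P)\,D(P_k^P\|P^k)$, but I would avoid using this directly: individual terms $D(P_k^P\|P^k)$ blow up for types $P$ near the boundary of $\mathcal P$, where some coordinate $P(a)$ is tiny, so I would instead work with total variation first and pass to relative entropy only at the very end.

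The core estimate is the Gibbs-conditioning step: the prefix of a uniform draw from $T(P)$ is close to i.i.d.-$P$. Using the method-of-types bounds (\ref{eq:sizesimple})--(\ref{eq:probsimple}) on the cardinality and probability of type classes, I would compare $P_k^P(x_1^k)$---a ratio of multinomial coefficients, i.e.\ sampling $k$ out of $n$ without replacement---with $P^k(x_1^k)$ and obtain a \emph{total-variation} bound, which stays finite up to the boundary. Averaging over $P\sim\mu$ and using the triangle inequality then yields $\|P_k-M_{\mu,k}\|_{\mathrm{TV}}\le\alpha$. I would expect the order of $\alpha$ to reflect the $n^{-1/2}$ fluctuation scale of the empirical measure fed through Pinsker's inequality (\ref{pinsker}) and a Sanov-type type count: taking the square root of a relative-entropy estimate of order $k/\sqrt n$ is what produces the $n^{-1/4}$ magnitude of $\alpha$, and hence of $\delta=\alpha\log(m^k/\alpha)$.

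Finally I would convert the total-variation bound back into the relative-entropy bound (\ref{finalbound}) through a typicality split of $A^k$. On the ``good'' set $G$ where $M_{\mu,k}$ is bounded below, the standard continuity estimate gives a contribution of order $\alpha\log(m^k/\alpha)=\delta$ to $D(P_k\|M_{\mu,k})$, which accounts for the $2\delta$ term. On the complementary ``bad'' set $B$, where $M_{\mu,k}$ is tiny and the log-ratio can be large, I would bound the log-ratio crudely by $\mathrm{O}(\log n)$ and control $P_k(B)$ by a large-deviations estimate from the method of types applied to the super-alphabet $A^k$ of size $m^k$, viewing $X_1^n$ as roughly $n/k$ blocks of length $k$: the number of block-types is at most $(\tfrac nk+1)^{m^k}$ and the associated Sanov exponent is of order $\tfrac nk\delta$, which together yield the second term $k\,e^{-\frac nk\delta}\big(\tfrac nk+1\big)^{2m^k}\log n$.

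The main obstacle is two-fold and lives in these last two steps. First, the Gibbs-conditioning estimate must be carried out in total variation and \emph{uniformly} over all types $P$ in the support of $\mu$, including boundary types, since a relative-entropy comparison per type is not integrable there. Second, the bad-set bound must decay fast enough to beat the polynomial-in-$n$ type-counting factor $(\tfrac nk+1)^{2m^k}$; balancing this decay against the sizes of $\alpha$ and $\delta$, so that both terms of $\epsilon(n,k)$ are small simultaneously, is the most delicate part of the argument and is what restricts it to the range $k\le(n/100)^{1/3}$.
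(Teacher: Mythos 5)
Your proposal starts from the same place as the paper (conditioning on the type, the two mixture representations $P_k=\sum_P\mu(P)P_k^P$ and $M_{\mu,k}=\sum_P\mu(P)P^k$, and the Gibbs-conditioning/method-of-types viewpoint on blocks), but the reason you give for abandoning the convexity route is false, and this matters. For an $n$-type $P$, the law $P_k^P$ of the first $k$ coordinates of a uniform draw from $T(P)$ is sampling without replacement, so its support is contained in that of $P^k$ (if $P(a)=0$ there are no symbols $a$ to draw), and wherever $P_k^P(x_1^k)>0$ one has $P_k^P(x_1^k)/P^k(x_1^k)\le\bigl(\tfrac{n}{n-k+1}\bigr)^k$. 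Hence $D(P_k^P\|P^k)\le k\log\tfrac{n}{n-k+1}$ \emph{uniformly} over all $n$-types, boundary ones included: nothing blows up. (Indeed this observation plus your two mixture representations already gives $D(P_k\|M_{\mu,k})\le k\log\tfrac{n}{n-k+1}$, which is stronger than the theorem and essentially the route of \cite{gavalakis:21}; the detour through total variation buys nothing.) The paper does apply joint convexity, twice, and handles the boundary exactly by this support argument (Lemma~\ref{lemmamaxD}: $Q^k(a_1^k)=0$ forces $W(a_1^k)=0$ for all $W\in E_k(Q)$, so $\max_{W\in E_k(Q)}D(W\|Q^k)\le k\log n$).

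The genuine gap is that your sketch never confronts the one real obstacle, flagged at the end of the heuristic proof: the constraint set $E_k(Q)$ has empty interior --- equivalently, $Q^k$ is in general not an $\ell$-type --- so the conditional limit theorem provides no lower bound on $\nu_\ell(B_{2\delta})$, the probability of the ``good'' part of the conditioning event. The entire point of Lemma~\ref{lem:1} is to manufacture, by randomly permuting a string of type $Q$ and running a Chebyshev second-moment argument on the resulting $\ell$-type of $k$-blocks, an explicit $W_0\in E_k(Q)\cap\mathcal P_\ell$ with $\max_{a_1^k}|W_0(a_1^k)-Q^k(a_1^k)|\le\alpha$ and $|H(W_0)-H(Q^k)|\le\delta$; the single type class $T(W_0)$ then supplies the denominator bound $\nu_\ell(B_{2\delta})\ge(\ell+1)^{-m^k}e^{-\ell(D^*+\delta)}$, and this construction --- not Pinsker's inequality fed with an entropy estimate of order $k/\sqrt n$ --- is the actual source of both $\alpha$ and $\delta$. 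Without it (or a substitute), the factor $e^{-\frac nk\delta}(\tfrac nk+1)^{2m^k}$ cannot be derived: in the paper it is the ratio of a Sanov-type upper bound on the atypical types in $E_k(Q)$ to the lower bound coming from $W_0$. Your alternative, bounding $P_k(B)$ for a bad subset $B\subset A^k$ by a large-deviations estimate over $n/k$ blocks, does not parse: $P_k$ is the law of a single $k$-block, and the event $\{X_1^k\in B\}$ is not a large-deviations event for the empirical measure of the blocks, so no exponent of order $\tfrac nk\delta$ is available for it. The quantitative core of the argument is therefore missing.
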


\noindent
Before giving the proof of the theorem,
some remarks are in order:
\begin{enumerate}
\item
It can be seen from \eqref{finalbound} that,
if $k$ stays bounded as $n\to\infty$,
then:
$$\epsilon(n,k) 
= O(\delta_{n,k}) 
= O\Bigl(\Bigl(\frac{k}{\sqrt{n}}\Bigr)^{1/2}\log{\frac{n}{k}}\Bigr)\to 0.$$
Moreover, in order for $\epsilon(n,k)$ to vanish, 
$k$ can grow at most logarithmically with $n$. 
This is, at least asymptotically, weaker than the 
bound~(\ref{firstdefinetti}) given in~\cite{gavalakis:21}
for the binary case $m=2$. What's more,
the proof of~(\ref{finalbound}) given below
is longer and more involved that the
corresponding proof of~(\ref{firstdefinetti})
in \cite{gavalakis:21}. So why bother?
The reason is that the proof given here
follows a completely different
information-theoretic path than that in~\cite{gavalakis:21},
and that path consists of an appealing sequence of steps
making interesting connections. So we first 
present a heuristic outline,
and then give the actual proof.
In fact, as will be seen from the proof
(especially Lemma~\ref{lem:1}), it is easy 
to improve the bound $\epsilon(n,k)$,
but our purpose here is to illustrate
the ideas rather than to obtain optimal
results.
\item
We have cheated slightly in the statement of the theorem,
in that the proof below is only given for the case when
$n$ is a multiple of $k$. However, this is only a minor
technical inconvenience; for example,
we can replace $n$ with an integer multiple of $k$ 
which is no less than $n-k$, leading to the same 
bound with $\epsilon(n-k,k)$
in place of $\epsilon(n,k)$.
\item
Fe Finetti's original theorem~(\ref{eq:deF})
easily follows from~(\ref{finalbound}) by an application of Pinsker's 
inequality \eqref{pinsker} and a standard 
weak convergence argument. 
\end{enumerate}

\noindent
{\sc Heuristic proof of de Finetti's theorem~(\ref{eq:deF}). }

{\em Step 1}: Since the sequence $\{X_n\}$ is exchangeable it is 
also stationary, therefore, by the ergodic theorem
$\hat{P}_{X_1^n}$ converges as $n\to\infty$
a.s. to a (random) $P$ on $A$. Let $\bar{\mu}$ denote
the law of $P$, and let
$\{Y_n\}$ be i.i.d.\ random variables 
uniformly distributed on $A$. Then, by exchangeability, we 
clearly have for any $n$, any $k\leq n$, 
any $n$-type $Q_n$, and any $a_1^k\in A^k$,
\begin{eqnarray}
\Pp(X_1^k =a_1^k|\hat{P}_{X_1^n}  = Q_n)
  =   \Pp(Y_1^k =a_1^k|\hat{P}_{Y_1^n} = Q_n).
\label{eq:corr}
\end{eqnarray}

{\em Step 2}: 
Choose and fix any one
of the almost all realisations $\{Q_n\}$ along which $\hat{P}_{X_1^n}$
converges to some $Q$ as $n\to\infty$. By (\ref{eq:corr})
and symmetry we have,
$$\Pp(X_1=a|\hat{P}_{X_1^n}  = Q_n)
=
	{\mathbb E}\Big(\IND_{\{Y_1=a\}}\Big|\hat{P}_{Y_1^n}=Q_n\Big)
=
	{\mathbb E}\Big(\frac{1}{n} \sum _{i=1}^n \IND_{\{Y_i=a\}}\Big|
		\hat{P}_{Y_1^n}=Q_n\Big),
$$
so that,
$$\Pp(X_1=a|\hat{P}_{X_1^n}  = Q_n)
=
 	{\mathbb E}\Big(\hat{P}_{Y_1^n}(a)\Big|\hat{P}_{Y_1^n}=Q_n\Big)\\
=
	 Q_n(a),$$
for any $a\in A$,
and letting $n\to\infty$ yields,
\be
\lim_{n \to \infty} \Pp(X_1=a|\hat{P}_{Y_1^n}=Q_n)=Q(a).
\label{eq:step2}
\ee

{\em Step 3}: 
Next we generalise (\ref{eq:step2}) to blocks
of random variables.
As before, choose and fix any one
of the almost all realisations $\{Q_n\}$ of 
the random $\hat{P}_{X_1^n}$ such that
$Q_n\to $ some $Q$ as $n\to\infty$. 
Define a new sequence of i.i.d.\ random variables 
$Z_n=(Y_{2n-1},Y_{2n})$, $n \ge 1,$ so that each $Z_n$ is
uniformly distributed on $A\times A$.
From (\ref{eq:corr}), taking $k=2$ and an arbitrary even
$n=2\ell$,
\be
\Pp\big((X_1,X_2)=(a_1,a_2)\big|\hat{P}_{X_1^{2\ell}}=Q_{2\ell}\big)
=
	\Pp\big(Z_1=(a_1,a_2)\big|\hat{P}_{Z_1^\ell} \in E(Q_{2\ell})\big),
\label{eq:step3.1}
\ee
where $E(Q)$ denotes the set of probability
distributions $W$ on $A\times A$ with the property that
the average of the two marginals $W_1$ and $W_2$ of $W$
equals $Q$, 
\ben
E(Q) =\Big\{W\; \mbox{on}\; A\times A\,:\, \frac{W_1+W_2}{2} =Q\Big\}.
% \label{eq:Em}
\een
If we write $U$ for the uniform distribution on $A\times A$,
it is easy to check that the distribution 
$W_\ell^*$ that uniquely achieves
the $\min_{W\in E(Q)} D(W\|U)$ is simply $Q\times Q$.
At this point, we would wish to apply the conditional limit 
theorem~\cite{cover:book2} to the i.i.d.\ process $\{Z_n\}$,
to obtain that,%\footnote{Even though $E_m(Q)$ does not have 
\begin{eqnarray*}
\lim_{\ell\to\infty}
\Pp\big(Z_1=(a_1,a_2)\big|\hat{P}_{Z_1^\ell} \in E(Q_{2\ell})\big)
& = & 
	\lim_{\ell \to \infty}W_{\ell}^*(a_1,a_2) \\
& = & 
	\lim_{\ell \to \infty}Q_{2\ell}(a_1)Q_{2\ell}(a_2) \\
& = & 
	Q(a_1)Q(a_2),
\end{eqnarray*}
and combining this with (\ref{eq:step3.1}) would yield:
$$\Pp\big((X_1,X_2)=(a_1,a_2)\big|\hat{P}_{X_1^{2\ell}}=Q_{2\ell}\big)
\to Q(a_1)Q(a_2),\;\;\; \ell\to\infty.$$
The same argument can be used without difficulty 
to show that
for any $k\geq 1$ and any $a_1^k\in A^k$,
\be
\lim_{\ell \to \infty} 
\Pp(X_1^k=a_1^k|\hat{P}_{X_1^{k\ell}}=Q_{k\ell})= Q^k(a_1^k).
\label{eq:step3.2}
\ee

{\em Step 4}: 
Since (\ref{eq:step3.2}) holds for almost 
every sequence $\{Q_n\}$, letting $\ell\to\infty$, by the 
bounded converge theorem we have,
$$\Pp(X_1^k=a_1^k)  =  {\mathbb E}
\Big(\Pp(X_1^k=a_1^k|\hat{P}_{X_1^{k\ell}})\Big)
\to  \int_{\cal P} Q^k(a_1^k)d\bar{\mu}(Q),$$
as required.
\hfill$\Box$

\medskip

The only problem with the above argument
is that the set $E(Q)$ has an empty 
interior so that the conditional limit theorem is not directly applicable. 
Nevertheless, in the next section where we take a finite-$n$ approach,
we are able to `imitate' the proof of the conditional limit theorem 
and replace the step where the non-empty interior assumption is used 
with a different argument.

\subsection{Proof}
%%%%%%%%%%%%%%%%%%%%%%%%%%%%%%%%%%%%%%%%%%%%%%%%%%%%%%%%%%%%%%%%%%

Recall the notation and terminology for types
described in the Introduction.
Let $\mu=\mu_n$ denote
the law of $\hat{P}_{X_1^n}$ on ${\cal P}$, and let
$\{Y_n\}$ be i.i.d.\ random variables 
uniformly distributed on $A$. For any $k\leq n$, 
any $n$-type $Q_n$, and any $a_1^k\in A^k$,
\ben
\Pp(X_1^k =a_1^k|\hat{P}_{X_1^n}  = Q_n)
  =  \Pp(Y_1^k =a_1^k|\hat{P}_{Y_1^n} = Q_n).
\een
Foe $k=1$ and any $a\in A$, by symmetry we have,
\ben
\Pp(X_1=a|\hat{P}_{X_1^n})
\;=\; \hat{P}_{X_1^n}(a),
\een
and taking the expectation of both sides with
respect to $\mu$ shows that in fact $P_{1}=M_{\mu,1}$.

For general $1\leq k\leq n$ with $n=k\ell$, 
for any $n$-type $Q$ we have,
\ben
\Pp(X^k_1=a_1^k | \hat{P}_{X_1^{k\ell}}=Q)  
& = & \Pp(Z_1=a_1^k | \hat{P}_{Z_1^\ell} \in E_{k}(Q))\\
& = & {\mathbb E}\big (\hat{P}_{Z_1^\ell}(a_1^k) \big| 
	\hat{P}_{Z_1^\ell} \in E_{k}(Q)\big),
\een
where now $\{Z_n\}$ is a sequence of i.i.d.\
random variables uniformly distributed on $A^k$,
and $E_{k}(Q)$ consists of all probability
distributions $W$ on $A^k$ with the property
that the average of the $k$ one-dimensional
marginals of $W$ equals $Q$.
Taking expectations with respect to 
$\hat{P}_{X_1^{k\ell}}=
\hat{P}_{X_1^{n}}\sim\mu=\mu_n$,
\ben
\Pp(X_1^k=a_1^k)
& = & 
	\int {\mathbb E}\big(\hat{P}_{Z_1^\ell}(a_1^k) \big| 
	\hat{P}_{Z_1^\ell} \in E_{k}(Q)\big)
		d\mu(Q),
\een
and by the joint convexity of relative entropy,
\be
D(P_k\|M_{\mu,k})
&=&
	D\Big(
	\int {\mathbb E}\big(\hat{P}_{Z_1^\ell}\big| 
	\hat{P}_{Z_1^\ell} \in E_{k}(Q)\big)
		d\mu(Q)
	\Big\|
	\int Q^k\;
		d\mu(Q)
	\Big)\nonumber\\
&\leq&
	\int
	D\Big(
	{\mathbb E}\big(\hat{P}_{Z_1^\ell}\big| 
	\hat{P}_{Z_1^\ell} \in E_{k}(Q)\big)
	\Big\|
	Q^k
	\Big)
	d\mu(Q)
	% \label{convexitysecond}\\
	\nonumber\\
&\leq&
	\int
	{\mathbb E}\Big(
	D(\hat{P}_{Z_1^\ell}\|Q^k)\,
	\Big| 
	\hat{P}_{Z_1^\ell} \in E_{k}(Q)
	\Big)
	d\mu(Q).
	\label{eq:fromConv}
\ee

We will obtain an explicit bound for the relative entropy
in~(\ref{eq:fromConv}). First, we construct a joint
$\ell$-type $W$ with desirable properties.
Let ${\cal P}_\ell$ denote
the set of $\ell$-types on $A^k$.

\begin{lemma}
\label{lem:1}
For any $\ell> k\geq 1$ and
any $n$-type $Q$, there is a $W\in E_{k}(Q) \cap \mathcal{P}_\ell$ with:
$$\max_{a_1^k}|W(a_1^k)-Q^k(a_1^k)|\leq
	M:=
	\Bigg[\frac{2}{\ell} 
	+ \frac{4k}{\ell} + 2\sqrt{ \frac{k}{\ell} }\Bigg]^{1/2}.$$
Moreover, for $2\leq k\leq \sqrt{\ell}/10,$
$$|H(W)-H(Q^k)|\leq -M\log\frac{M}{m^k}.$$
\end{lemma}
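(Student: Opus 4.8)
The plan is to realise a suitable $W$ explicitly through the partition structure that defines $E_k(Q)$. The key observation is that, with $n=k\ell$, the set $E_k(Q)\cap\mathcal{P}_\ell$ is precisely the collection of types on $A^k$ that arise by taking any string $w_1^n\in A^n$ of type $Q$ (which exists since $Q$ is an $n$-type), cutting it into $\ell$ consecutive blocks of length $k$, and recording the empirical distribution of these $\ell$ blocks on $A^k$: flattening such a block-string recovers a type-$Q$ string, which is exactly the requirement that the average of the $k$ marginals equal $Q$. I would therefore take a \emph{uniformly random} arrangement of the multiset of symbols underlying $Q$, cut it into $\ell$ blocks, and let $\hat{P}$ be the resulting (random) block-type. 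By construction $\hat{P}\in E_k(Q)\cap\mathcal{P}_\ell$ with probability one, so it remains only to show that some realisation is close to $Q^k$ and has comparable entropy.

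For the closeness I would write $\hat{P}-Q^k=(\hat{P}-\mathbb{E}\hat{P})+(\mathbb{E}\hat{P}-Q^k)$ and treat the two parts separately. The bias $\mathbb{E}\hat{P}-Q^k$ is exactly the discrepancy between drawing a single block by sampling $k$ symbols \emph{without} replacement from the $n=k\ell$ symbols and drawing them \emph{with} replacement from $Q$, i.e.\ a hypergeometric-versus-product estimate. For the fluctuation, a second-moment computation gives $\mathbb{E}\sum_{a_1^k}(\hat{P}(a_1^k)-\mathbb{E}\hat{P}(a_1^k))^2=O(1/\ell)$ (the $\ell$ blocks are exchangeable with small negative correlation, so the variance is no larger than in the i.i.d.\ case), whence by Markov's inequality there is a realisation $W$ with $\sum_{a_1^k}(W(a_1^k)-Q^k(a_1^k))^2\le M^2$, and hence $\max_{a_1^k}|W(a_1^k)-Q^k(a_1^k)|\le M$. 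I expect the precise, and as the authors note loose, constants $\tfrac{2}{\ell}+\tfrac{4k}{\ell}+2\sqrt{k/\ell}$ to fall out of crudely bounding the without-replacement corrections in this second-moment step rather than from any essential feature of the problem.

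For the entropy estimate I would feed the closeness of $W$ to $Q^k$ into the method-of-types/Csisz\'ar--K\"orner uniform-continuity lemma, which bounds $|H(P)-H(P')|$ by $-\theta\log(\theta/|\mathcal{X}|)$ in terms of a deviation $\theta$ between $P$ and $P'$, once $\theta\le 1/2$, on an alphabet $\mathcal{X}$; here $\mathcal{X}=A^k$ has size $m^k$ and $\theta=M$. The role of the hypothesis $2\le k\le\sqrt{\ell}/10$ is exactly to push $M$ below the threshold the estimate needs (it forces $\sqrt{k/\ell}\le 1/\sqrt{200}$, so $M$ is small), after which the bound reads $-M\log(M/m^k)$ as claimed.

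The main obstacle is the simultaneous satisfaction of the two constraints defining $E_k(Q)\cap\mathcal{P}_\ell$: integrality (being a genuine $\ell$-type) and the marginal-average constraint (lying in $E_k(Q)$), while keeping $W$ close to the product $Q^k$. Rounding $Q^k$ coordinatewise is easy but destroys the marginal constraint, and repairing the marginals can reintroduce large errors; the random-partition construction is what makes both constraints hold automatically, so the difficulty is displaced onto the quantitative estimate. Within that estimate the delicate point is controlling the sampling-without-replacement bias uniformly over all $m^k$ patterns $a_1^k$, the fluctuation being a routine variance calculation. I would also keep an eye on the degenerate regime $\ell\lesssim m^k$, where no $\ell$-type can match the entropy of $Q^k$; this is harmless because $\epsilon(n,k)$ is informative only when $\ell$ greatly exceeds $m^k$, but it indicates that the entropy half of the lemma is really being used in that favourable regime.
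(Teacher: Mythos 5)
Your proposal is correct and follows essentially the same route as the paper: a uniformly random permutation of a type-$Q$ string cut into $\ell$ blocks, a second-moment/Chebyshev argument (your $\ell^2$-plus-Markov version is literally the same quantity as the paper's union bound over patterns) with hypergeometric-versus-product estimates for the without-replacement bias, and the Csisz\'ar--K\"orner uniform continuity of entropy for the second claim. The only small inaccuracy is the aside that the block correlations are negative so the variance is ``no larger than in the i.i.d.\ case'' --- the pair probability of a repeated block can exceed the product by a factor $1+4k/\ell$, which is exactly where the $4k/\ell$ term in $M^2$ comes from --- but this does not affect the validity of your plan.
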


\begin{proof}
Let $x_1^{k\ell}\in A^{k\ell}$ have type $Q$,
let $V_1^{k\ell}$ be a random permutation of $x_1^{k\ell}$,
and let $\What$ denote its (random) $\ell$-type.
Obviously we have that 
$\What\in E_{k}(Q)$ by construction,
and we will also show that $\What$ satisfies the
statement of the lemma with positive 
probability.
Taking any $k\leq \ell$ and $\gamma>0$ arbitrary, 
% and let $\epsilon= [\frac{2}{m} + \frac{4k}{m} 
% + 2\sqrt{ \frac{k}{m} }]^{1/2}.$ Then,
\begin{align}
\Pp\Big(\max_{a_1^k}
&\,
	|\What(a_1^k)-Q^k(a_1^k)|>\gamma\Big)
	\nonumber\\
&\leq
	\sum_{a_1^k}
	\Pp\Big( |\What(a_1^k)-Q^k(a_1^k)|>\gamma\Big)
	\nonumber\\
&\leq
	\sum_{a_1^k}
	\gamma^{-2}
	\mathbb{E}\Big[\Big(\What(a_1^k)-Q^k(a_1^k)\Big)^2\Big]
	\nonumber\\
&=
	\gamma^{-2}
	\sum_{a_1^k}
	\Big[\rho_2(a_1^k)
	-2Q^k(a_1^k)\rho_1(a_1^k)
	+Q^k(a_1^k)^2\Big],
	\label{eq:secondMom}
\end{align}
where
$\rho_2(a_1^k)=\mathbb{E}\big[\What(a_1^k)^2\big]$ and
$\rho_1(a_1^k)= \mathbb{E}\big[\What(a_1^k)\big]$.

Now we find appropriate bounds 
so that the above probability is $<1$. To get an 
upper bound on $\rho_1(a_1^k)$ for some fixed $a_1^k$
note that,
$$\rho_1(a_1^k) = \Pp(V_1^k=a_1^k)
\leq \prod_{i=1}^k\frac{n(a_i)}{\ell k-i+1},
$$
where $n(a_i)$ is the number of appearances of $a_i$
in $x_1^{k\ell}$, and hence,
\ben
\rho_1(a_1^k)
&\leq&
	\prod_{i=1}^k\frac{n(a_i)}{\ell k}\frac{\ell k}{\ell k-i+1}\\
&\leq&
	Q^k(a_1^k)\Big(\frac{\ell k}{\ell k-k}\Big)^k\\
&\leq&
	Q^k(a_1^k)\Big(1-\frac{1}{\ell}\Big)^{-k}\\
&\leq&
	Q^k(a_1^k)(1+k/\ell),
\een
since $(1-x)^{-k}\leq 1+kx$ for $x\in[0,1)$.
Similarly, writing $a_1^k*a_1^k$ for the concatenation
of $a_1^k$ with itself, we can estimate,
$$\Pp(V_1^{2k}=a_1^k*a_1^k)\leq Q^k(a_1^k)^2(1+4k/\ell),$$
so that,
\be
\rho_2(a_1^k)
&=&
	\ell\frac{1}{\ell^2}\rho_1(a_1^k)+\ell(\ell-1)\frac{1}{\ell^2}
	\Pp(V_1^{2k}=a_1^k*a_1^k)
	\nonumber\\
&\leq&
	(1/\ell)(1+k/\ell)Q^k(a_1^k)+(1+4k/\ell)Q^k(a_1^k)^2
	\nonumber\\
&\leq&
	(2/\ell)Q^k(a_1^k)+(1+4k/\ell)Q^k(a_1^k)^2.
	\label{eq:rho2bound}
\ee
Substituting the bound
(\ref{eq:rho2bound}) in (\ref{eq:secondMom}) we have,
\be
&&
\Pp\Big( \max_{a_1^k}|W(a_1^k)-Q^k(a_1^k)|>\gamma\Big)
	\nonumber\\
&&\leq
	\gamma^{-2}
	\sum_{a_1^k}
	Q^k(a_1^k)
	\Big[\frac{2}{\ell}+\Big(2+\frac{4k}{\ell}\Big)Q^k(a_1^k)
	-2\rho_1(a_1^k)
	\Big]
	\nonumber\\
&&\leq
	\gamma^{-2}
	\max_{a_1^k}
	\Big[\frac{2}{\ell}+\Big(2+\frac{4k}{\ell}\Big)Q^k(a_1^k)
	-2\rho_1(a_1^k)
	\Big].
	\label{eq:norho1}
\ee
Finally, we get a lower bound on 
$\rho_1(a_1^k)$.
In the case where for some $\beta>0$
(to be chosen later),
$Q(a_i)> \beta$ for all $a_i$, 
we have,
$$
\rho_1(a_1^k)\geq
	\prod_{i=1}^k\frac{n(a_i)-i+1}{\ell k-i+1}\geq
	\prod_{i=1}^k\frac{n(a_i)-k+1}{\ell k}\geq
	Q^k(a_1^k)\prod_{i=1}^k\Big(1-\frac{1}{\ell Q(a_i)}\Big),
$$
so that,
\be
\rho_1(a_1^k)
>
	Q^k(a_1^k)\Big(1-\frac{1}{\ell\beta}\Big)^k
\geq
	Q^k(a_1^k)\Big(1-\frac{k}{\ell\beta}\Big),
	\label{eq:rho1bound}
\ee
assuming $\beta\geq 1/2\ell,$ since $(1-x)^k\geq 1-kx$ for
all $k\geq 1$ and all $x\in[0,2]$.
For all such $a_1^k$, 
using (\ref{eq:rho1bound}) 
we can bound the expression 
in the maximum in (\ref{eq:norho1}) by
$$\Big[\frac{2}{\ell}+\Big(2+\frac{4k}{\ell}\Big)Q^k(a_1^k)
	-2\rho_1(a_1^k)
	\Big]
	<
	\frac{2}{\ell} +\frac{4k}{\ell} +\frac{2k}{\ell\beta}.$$
And in the case when at least one $a_i$ has $Q(a_i)\leq\beta$,
simply omitting the negative term and noting that $Q^k(a_1^k)\leq\beta$
we bound the same term above by,
$$\Big[\frac{2}{\ell}+\Big(2+\frac{4k}{\ell}\Big)Q^k(a_1^k)
	\Big]
	\leq
	\frac{2}{\ell} +
	\Big(2+\frac{4k}{\ell}\Big)\beta.
$$
Combining the last three bounds,
$$\Pp\Big( \max_{a_1^k}|W(a_1^k)-Q^k(a_1^k)|>\gamma\Big)
\leq
	\gamma^{-2}
	\Big[\frac{2}{\ell}
	+
	\max
	\Big\{
	\frac{2k}{\ell}\big(2+\frac{1}{\beta}\big),\;
	\big(2+\frac{4k}{\ell}\big)\beta
	\Big\}\Big],
$$
where the inequality is strict when the first
term dominates the maximum.
To obtain a good bound we
take for $\beta$ a value approximately equal to
the minimiser of the above expression: We
set $\beta^*=\sqrt{k/\ell}$.
Note that for this $\beta^*$ it can be easily 
verified that the first term strictly dominates 
the maximum, giving,
$$\Pp\Big( \max_{a_1^k}|W(a_1^k)-Q^k(a_1^k)|>\gamma\Big)
<
	\gamma^{-2}
	\Big[\frac{2}{\ell} + \frac{4k}{\ell} + 2\sqrt{ \frac{k}{\ell} }\Big],
$$
and taking $\gamma=M$ as in the lemma,
completes the proof of the first statement. 

For the second part, noting that for $2\leq k\leq\sqrt{\ell}/10$
we have $M<1/2$, the result follows 
from~\cite[Lemma~2.7]{csiszar:book}.
\end{proof}

Next we obtain an upper bound on the conditional
expectation 
in~(\ref{eq:fromConv}). 

\begin{lemma}\label{Dbound}
Suppose $n = \ell k$, with $2\leq k\leq\sqrt{\ell}/10$. 
For any $n$-type $Q$
we have:
$$	\mathbb{E}\Big(
	D(\hat{P}_{Z_1^\ell}\|Q^k)\,
	\Big| 
	\hat{P}_{Z_1^\ell} \in E_{k}(Q)
	\Big)
\leq \epsilon(n,k).
$$
\end{lemma}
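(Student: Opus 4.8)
The plan is to reduce everything to a concentration statement for the conditional law of the empirical type $\hat{P}_{Z_1^\ell}$, using the explicit near-maximiser $W$ supplied by Lemma~\ref{lem:1}. The first step, and the one that makes the whole argument go through cleanly, is to notice that on the affine family $E_{k}(Q)$ the relative entropy $D(\,\cdot\,\|Q^k)$ collapses to an entropy deficit. Indeed, for any $P\in E_{k}(Q)$ one computes $\sum_{a_1^k}P(a_1^k)\log Q^k(a_1^k)=\sum_{i=1}^k\sum_{a}P_i(a)\log Q(a)=k\sum_a Q(a)\log Q(a)=-H(Q^k)$, since the average of the $k$ one-dimensional marginals $P_i$ of $P$ equals $Q$; hence
$$D(P\|Q^k)=H(Q^k)-H(P),\qquad P\in E_{k}(Q).$$
(This is just the Pythagorean identity for the $I$-projection of the uniform law $U$ onto the linear family $E_{k}(Q)$, whose unique minimiser of $D(\,\cdot\,\|U)$ is the maximum-entropy law $Q^k$.) Consequently the quantity to be bounded equals $H(Q^k)-\mathbb{E}\big(H(\hat{P}_{Z_1^\ell})\,\big|\,\hat{P}_{Z_1^\ell}\in E_{k}(Q)\big)$, and it suffices to show that, conditionally on $E_{k}(Q)$, the empirical type has entropy close to the maximal value $H(Q^k)$.

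Next I would split the conditional expectation according to a threshold $2\delta$ on the deficit $D(\hat{P}_{Z_1^\ell}\|Q^k)=H(Q^k)-H(\hat{P}_{Z_1^\ell})$. On the ``good'' event $\{D(\hat{P}_{Z_1^\ell}\|Q^k)\le 2\delta\}$ the contribution is at most $2\delta$, which supplies the leading term of $\epsilon(n,k)$. Everything then rests on showing the ``bad'' event $\{H(\hat{P}_{Z_1^\ell})<H(Q^k)-2\delta\}$ has exponentially small conditional probability. For the numerator I use the upper type-class bound $|T(P)|\le e^{\ell H(P)}$ from \eqref{eq:sizesimple}, so each bad type $P$ has $\mathbb{P}(\hat{P}_{Z_1^\ell}=P)=|T(P)|m^{-k\ell}\le e^{\ell(H(Q^k)-2\delta)}m^{-k\ell}$. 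For the denominator I invoke Lemma~\ref{lem:1}, which (with the stated constants one checks $M=\alpha$ and $-M\log(M/m^k)=\delta$) produces a type $W\in E_{k}(Q)\cap\mathcal{P}_\ell$ with $H(W)\ge H(Q^k)-\delta$; the lower type-class bound in \eqref{eq:sizesimple} then gives $\mathbb{P}(\hat{P}_{Z_1^\ell}\in E_{k}(Q))\ge |T(W)|m^{-k\ell}\ge(\ell+1)^{-m^k}e^{\ell(H(Q^k)-\delta)}m^{-k\ell}$. Dividing, each bad type satisfies $\mathbb{P}(\hat{P}_{Z_1^\ell}=P\mid E_{k}(Q))\le(\ell+1)^{m^k}e^{-\ell\delta}$, and since there are at most $(\ell+1)^{m^k}$ types in $\mathcal{P}_\ell$ by \eqref{eq:ntypes}, the total bad conditional probability is at most $(\ell+1)^{2m^k}e^{-\ell\delta}$.

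To finish, I would bound the value of $D(\hat{P}_{Z_1^\ell}\|Q^k)$ on the bad event crudely. Since any $P\in E_{k}(Q)$ is supported on the support of $Q^k$, and $Q$ being an $n$-type has all its positive atoms at least $1/n$, we get $Q^k(a_1^k)\ge n^{-k}$ on that support, whence $D(P\|Q^k)\le k\log n$. Multiplying this by the bad probability and recalling $\ell=n/k$ yields a bad contribution of at most $k\log n\,\big(\tfrac{n}{k}+1\big)^{2m^k}e^{-\frac{n}{k}\delta}$, the second term of $\epsilon(n,k)$; adding the two pieces gives the claim. The main obstacle is precisely the denominator lower bound: as noted after the heuristic argument, $E_{k}(Q)$ has empty interior, so the conditional limit theorem does not apply off the shelf and one cannot simply assert concentration of the conditional law on $Q^k$. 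Lemma~\ref{lem:1} is exactly what repairs this, by exhibiting a single genuine $\ell$-type inside $E_{k}(Q)$ whose entropy is within $\delta$ of the maximum, thereby pinning the normalising probability from below and letting the method-of-types estimates do the rest. The only care needed is to keep the threshold strictly above $\delta$ (taking $2\delta$) so that the entropy gap between bad types and $W$ stays bounded away from zero and produces genuine exponential decay.
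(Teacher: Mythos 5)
Your proposal is correct and follows essentially the same route as the paper's proof: a partition of $E_k(Q)$ at threshold $2\delta$, the type-class upper bound for the bad set, Lemma~\ref{lem:1} to pin the normalising probability from below via the near-maximal-entropy type $W$, and the crude $k\log n$ bound (Lemma~\ref{lemmamaxD}) on the exponentially rare bad event. Your reformulation of the Pythagorean step as the identity $D(P\|Q^k)=H(Q^k)-H(P)$ on $E_k(Q)$ is just the paper's $D(W\|U_k)=D^*+D(W\|Q^k)$ written in terms of entropies, so the two arguments coincide.
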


\begin{proof}
We follow the same steps as in the proof
of the conditional limit theorem in~\cite{cover:book2}.
Recall that if we write $U_k$ for the uniform distribution 
on $A^k$, then the $W_{k}^*$ that uniquely achieves
$D^*=\min_{W\in E_{k}(Q)} D(W\|U_k)$ is $W_{k}^*=Q^k$.
We partition $E_{k}(Q)$ into $B_{2 \delta}$ and $C=E_{k}(Q)-B_{2 \delta}$, 
where $B_{2 \delta}=\{W \in E_k(Q) \,:\,D(W\|U_k) \le D^*+2 \delta \}$, with $\delta = \delta_{n,k}$.
Then, writing $\nu_\ell$ for the distribution of
$\hat{P}_{Z_1^\ell}$,
\begin{eqnarray*}
  \nu_\ell(C|E_{k}(Q))
	= \frac {\nu_\ell(C\cap E_{k}(Q))}{\nu_\ell(E_{k}(Q))} 
	\le \frac {\nu_\ell(C)} {\nu_\ell(B_{2 \delta})}.
\end{eqnarray*}
Next we bound the above numerator and denominator.
For the numerator, writing again
${\cal P}_\ell$ for the set of $\ell$-types on $A^k$,
\begin{eqnarray*}
\nu_\ell(C)
& \eqa & \sum_{W \in C\cap \mathcal{P}_\ell} U_k^\ell(T(W))\\
& \leqb & \sum_{W \in C\cap \mathcal{P}_\ell} e^{-\ell D(W\|U_k)}\\
& \leqc & |E_{k}(Q)\cap \mathcal{P}_\ell|e^{-\ell(D^*+2 \delta)}\\
& \leqd & (\ell+1)^{m^{k}}e^{-\ell(D^*+2 \delta)}
\end{eqnarray*}
where $T(W)$ in $(a)$ denotes the type class of all
strings of length $\ell$ in $A^k$ with type $W$,
$(b)$ is a standard property~\cite{cover:book2}, $(c)$ follows from the 
definition of $C$ and the fact that 
$E_{k}(Q)\cap \mathcal{P}_\ell \subset E_k(Q)$, 
and $(d)$ follows from the 
standard observation that 
$|E_{k}(Q)\cap \mathcal{P}_\ell|\leq | \mathcal{P}_\ell| 
\leq (\ell+1)^{m^{k}}$.
Similarly, letting $W_0$ denote the type from Lemma~\ref{lem:1},
\begin{eqnarray*}
\nu_\ell(B_{2 \delta})
& \ge & 
	\nu_\ell(B_{\delta})\\
& = & \sum_{W \in B_{\delta}\cap \mathcal{P}_\ell} U_k^\ell(T(W))\\
& \ge & U_k^\ell(T(W_0))\\
& \ge & (\ell+1)^{-m^k} e^{-\ell D(W_0\|U)}\\
& \ge & (\ell+1)^{-m^k} e^{-\ell(D^*+\delta)}.
\end{eqnarray*}
Combining these bounds, we obtain,
$$\Pp\big(\hat{P}_{Z_1^\ell} \in C\big|\hat{P}_{Z_1^\ell} \in E_k(Q)\big) 
\le (\ell+1)^{2m^k}e^{-\ell \delta},$$
or,
\begin{eqnarray*}
\Pp\Bigl(D(\hat{P}_{Z_1^\ell}\|U_k)>D^*+2 \delta
\Big|\hat{P}_{Z_1^\ell} \in E_k(Q)\Bigr) 
\le (\ell+1)^{2m^k}e^{-\ell \delta}.
\end{eqnarray*}
Since the set $E_k(Q)$ is closed and convex, we may apply the 
Pythagorean identity for relative 
entropy~\cite{cover:book2} to conclude that:
\begin{eqnarray*}
\Pp\Bigl(D(\hat{P}_{Z_1^\ell}\|Q^k)>2 \delta |\hat{P}_{Z_1^\ell} 
\in E_k(Q)\Bigr) 
\le (\ell+1)^{2m^k} e^{-\ell \delta}.
\end{eqnarray*}
Thus, 
\begin{equation*}
\E{D(\hat{P}_{Z_1^\ell}\|Q^k)\big|\hat{P}_{Z_1^\ell} \in E_\ell(Q)}
\le  (\ell+1)^{2m^k} e^{-\ell \delta}\max_{P \in E_k(Q)} {D(P\|Q^k)}+2 \delta.
\end{equation*}
The claimed bound now follows by Lemma~\ref{lemmamaxD} 
on taking $\ell = k/n$.
\end{proof}

\begin{lemma} \label{lemmamaxD}
For any $n$-type $Q$, $\max_{W \in E_k(Q)} D(W\|Q^k) \leq k\log{n}$.
\end{lemma}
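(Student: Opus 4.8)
The plan is to avoid optimising over the (large) set $E_k(Q)$ altogether, and instead to compute $D(W\|Q^k)$ in closed form for \emph{every} $W\in E_k(Q)$, reducing it to a difference of entropies. Writing $W_1,\dots,W_k$ for the $k$ one-dimensional marginals of a PMF $W$ on $A^k$, the defining constraint of $E_k(Q)$ is $\frac1k\sum_{i=1}^kW_i=Q$, equivalently $\sum_{i=1}^kW_i(a)=kQ(a)$ for every $a\in A$. I would first note that any $W\in E_k(Q)$ with $D(W\|Q^k)<\infty$ is automatically supported on $(\mathrm{supp}\,Q)^k$: if $Q(a)=0$ then $\sum_iW_i(a)=0$ forces $W_i(a)=0$ for all $i$, so $W$ never places the symbol $a$ in any coordinate, and the relative entropy is finite.

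The key step is the identity
$$D(W\|Q^k)=kH(Q)-H(W),\qquad W\in E_k(Q).$$
To establish it, I would write $D(W\|Q^k)=-H(W)-\sum_{a_1^k}W(a_1^k)\log Q^k(a_1^k)$ and use the product form $Q^k(a_1^k)=\prod_{i=1}^kQ(a_i)$ to turn the second term into $\sum_{i=1}^k\sum_aW_i(a)\log Q(a)$. Interchanging the sums and applying the marginal constraint $\sum_iW_i(a)=kQ(a)$ collapses this to $k\sum_aQ(a)\log Q(a)=-kH(Q)$, which yields the identity.

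Given the identity, the bound is immediate and uniform in $W$: since $H(W)\ge 0$ we get $D(W\|Q^k)\le kH(Q)$ for all $W\in E_k(Q)$. It then only remains to bound $H(Q)$ by $\log n$. Here I would use that $Q$ is an $n$-type: every nonzero value of $Q$ has the form $n(a)/n\ge 1/n$, so $Q$ has at most $n$ atoms, whence $H(Q)\le\log|\mathrm{supp}\,Q|\le\log n$. Combining, $\max_{W\in E_k(Q)}D(W\|Q^k)\le kH(Q)\le k\log n$.

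The main point requiring care—rather than a genuine obstacle—is the finiteness and support bookkeeping needed to justify the identity cleanly, together with the observation that the $\log n$ (as opposed to $\log m$) comes precisely from the $n$-type structure of $Q$ and not from the alphabet size $m$. Once the marginal constraint is used to eliminate the cross term, everything else is a one-line computation.
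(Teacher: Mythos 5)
Your proof is correct. It differs from the paper's in one substantive step: where the cross term $-\sum_{a_1^k}W(a_1^k)\log Q^k(a_1^k)$ is handled. The paper never evaluates it; it simply lower-bounds $Q^k(a_1^k)\geq n^{-k}$ on the support (using that every nonzero coordinate of an $n$-type is at least $1/n$) to get $D(W\|Q^k)\leq k\log n-H(W)\leq k\log n$. You instead use the defining marginal constraint $\sum_{i=1}^kW_i=kQ$ of $E_k(Q)$ to collapse the cross term exactly, obtaining the identity $D(W\|Q^k)=kH(Q)-H(W)$, and then invoke the $n$-type structure only through the support-size bound $H(Q)\leq\log|\mathrm{supp}\,Q|\leq\log n$. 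The initial support/finiteness bookkeeping is identical in both arguments (and is needed in both). Your route buys a strictly sharper intermediate bound, $\max_{W\in E_k(Q)}D(W\|Q^k)\leq kH(Q)$, which can be much smaller than $k\log n$ when $Q$ is far from uniform (and also yields $k\log m$ for free when $m<n$); the paper's route is marginally shorter since it never needs the marginal constraint beyond the support argument. One small phrasing quibble: you restrict to ``$W\in E_k(Q)$ with $D(W\|Q^k)<\infty$,'' but your own support argument shows that \emph{every} $W\in E_k(Q)$ is supported on $(\mathrm{supp}\,Q)^k$ and hence has finite divergence, so no such restriction is needed; this does not affect correctness.
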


\begin{proof}
If  $a_1^k \in A^k$ is such that $Q^k(a_1^k) = \prod_{i=1}^k{Q(a_i)}=0$,
then $Q(a_{i_0}) = 0$ for some $i_0$. Since 
$Q(a_{i_0}) = \frac{1}{k}\sum_{j=1}^k{W_j(a_{i_0})}$, we must 
have $W_1(a_{i_0}) = \cdots = W_k(a_{i_0}) = 0$,
which implies that $W(a_1^k)= 0.$ 

On the other hand, if $Q^k(a_1^k)>0$ then $Q^k(a_1^k) \ge \frac {1}{n^k}$.  
Thus, for any $W \in E_k(Q)$, 
\begin{eqnarray*}
D(W\|Q^k)
& = & \sum_{a_1^k \in A^k}W(a_1^k)\log \frac {W(a_1^k)} {Q^k(a_1^k)}\\
& \le & \sum_{a_1^k \in A^k}W(a_1^k)\log \frac {W(a_1^k)}  {({\frac {1} {n}})^k}\\
& = & k\log n- H(W)\\
& \le & k\log n,
\end{eqnarray*}
as required.
\end{proof}

Theorem~\ref{thm} follows from~\eqref{eq:fromConv} 
combined with Lemma~\ref{Dbound}.

\begin{acknowledgement}
We wish to thank Andrew Barron for pointing out several useful
references for the historical development outlined in our Introduction.
\end{acknowledgement}

% BibTeX users please use
\bibliographystyle{plain}
% \bibliography{../../latex/ik}

\begin{thebibliography}{10}

\bibitem{algoet:92}
P.H. Algoet and B.H. Marcus.
\newblock Large deviation theorems for empirical types of {Markov} chains
  constrained to thin sets.
\newblock {\em IEEE Trans. Inform. Theory}, 38(4):1276--1291, July 1992.

\bibitem{artstein:04}
S.~Artstein, K.~Ball, F.~Barthe, and A.~Naor.
\newblock Solution of {Shannon's} problem on the monotonicity of entropy.
\newblock {\em J. Amer. Math. Soc.}, 17(4):975--982, 2004.

\bibitem{bakry-emery:85}
D.~Bakry and M.~{\'E}mery.
\newblock Diffusions hypercontractives.
\newblock In {\em Seminaire de probabilit{\'e}s XIX 1983/84}, pages 177--206.
  Springer, 1985.

\bibitem{Kcompound:10}
A.D. Barbour, O.~Johnson, I.~Kontoyiannis, and M.~Madiman.
\newblock Compound {Poisson} approximation via information functionals.
\newblock {\em Electron. J. Probab}, 15:1344--1369, 2010.

\bibitem{barron:clt}
A.R. Barron.
\newblock Entropy and the central limit theorem.
\newblock {\em Ann. Probab.}, 14(1):336--342, January 1986.

\bibitem{barron:97}
A.R. Barron.
\newblock Information theory in probability, statistics, learning, and neural
  nets.
\newblock In Y.~Freund and R.E. Schapire, editors, {\em Proceedings of the
  Tenth Annual Conference on Computational Learning Theory (COLT)}, Nashville,
  Tennessee, July 1997.
\newblock Available at:
  http://www.stat.yale.edu/$\sim$arb4/publications\_files/COLT97.pdf.

\bibitem{barron:isit00}
A.R. Barron.
\newblock Limits of information, {M}arkov chains, and projection.
\newblock In {\em 2000 IEEE International Symposium on Information Theory
  (ISIT)}, Sorrento, Italy, June 2000.

\bibitem{bayarri:04}
M.~Bayarri and J.O. Berger.
\newblock The interplay of {Bayesian} and frequentist analysis.
\newblock {\em Statistical Science}, 19(1):58--80, 2004.

\bibitem{blachman:65}
N.M. Blachman.
\newblock The convolution inequality for entropy powers.
\newblock {\em IEEE Trans. Inform. Theory}, 11(2):267--271, April 1965.

\bibitem{bobkov:99}
S.G. Bobkov and F.~G{\"o}tze.
\newblock Exponential integrability and transportation cost related to
  logarithmic {S}obolev inequalities.
\newblock {\em J. Funct. Anal.}, 163(1):1--28, 1999.

\bibitem{bobkov-ledoux:98}
S.G. Bobkov and M.~Ledoux.
\newblock On modified logarithmic {S}obolev inequalities for {B}ernoulli and
  {P}oisson measures.
\newblock {\em J. Funct. Anal.}, 156(2):347--365, 1998.

\bibitem{boucheron:book}
S.~Boucheron, G.~Lugosi, and P.~Massart.
\newblock {\em Concentration inequalities: {A} nonasymptotic theory of
  independence}.
\newblock Oxford University Press, Oxford, U.K., 2013.

\bibitem{brown:82}
L.D. Brown.
\newblock A proof of the central limit theorem motivated by the
  {C}ram\'er-{R}ao inequality.
\newblock In {\em Statistics and Probability: Essays in Honor of C.R. Rao},
  pages 141--148. North-Holland, Amsterdam, 1982.

\bibitem{brown:86}
L.D. Brown.
\newblock {\em Fundamentals of statistical exponential families: With
  applications in statistical decision theory}, volume~9 of {\em IMS Lecture
  Notes Monograph Series}.
\newblock Institute of Mathematical Statistics, Hayward, CA, 1986.

\bibitem{bubeck:18}
S.~Bubeck and S.~Ganguly.
\newblock Entropic {CLT} and phase transition in high-dimensional {W}ishart
  matrices.
\newblock {\em International Mathematics Research Notices}, 2018(2):588--606,
  2018.

\bibitem{carlen:09}
E.A Carlen and D.~Cordero-Erausquin.
\newblock Subadditivity of the entropy and its relation to {Brascamp-Lieb} type
  inequalities.
\newblock {\em Geometric \& Functional Analysis}, 19(2):373--405, 2009.

\bibitem{carlen:91}
E.A. Carlen and A.~Soffer.
\newblock Entropy production by block variable summation and central limit
  theorems.
\newblock {\em Comm. Math. Phys.}, 140(2):339--371, 1991.

\bibitem{costa-cover:84}
M.H.M. Costa and T.M. Cover.
\newblock On the similarity of the entropy power inequality and the
  {B}runn-{M}inkowski inequality.
\newblock {\em IEEE Trans. Inform. Theory}, 30(6):837--839, November 1984.

\bibitem{cover:book2}
T.M. Cover and J.A. Thomas.
\newblock {\em Elements of information theory}.
\newblock J. Wiley \& Sons, New York, second edition, 2012.

\bibitem{csiszar:63}
I.~Csisz{\'a}r.
\newblock Eine informationstheoretische ungleichung und ihre anwendung auf den
  beweis der ergodizität von {M}arkoffschen ketten.
\newblock {\em Publ. Math. Inst. Hungar. Acad. Sci.}, 8:85--108, 1963.

\bibitem{csiszar:67}
I.~Csisz{\'a}r.
\newblock Information-type measures of difference of probability distributions
  and indirect observations.
\newblock {\em Studia Sci. Math. Hungar.}, 2:299--318, 1967.

\bibitem{csiszar:75}
I.~Csisz{\'{a}}r.
\newblock ${I}$-divergence geometry of probability distributions and
  minimization problems.
\newblock {\em Ann. Probab.}, 3(1):146--158, February 1975.

\bibitem{csiszar:84}
I.~Csisz{\'a}r.
\newblock Sanov property, generalized ${I}$-projection and a conditional limit
  theorem.
\newblock {\em Ann. Probab.}, 12(3):768--793, August 1984.

\bibitem{csiszar:97}
I~Csisz{\'a}r.
\newblock Information theoretic methods in probability and statistics.
\newblock In {\em 1997 IEEE International Symposium on Information Theory
  (ISIT)}, Ulm, Germany, June 1997.

\bibitem{csiszar:98}
I.~Csisz{\'a}r.
\newblock The method of types.
\newblock {\em IEEE Trans. Inform. Theory}, 44(6):2505--2523, October 1998.
\newblock Information theory: 1948--1998.

\bibitem{csiszar:06}
I.~Csisz{\'a}r.
\newblock A simple proof of {Sanov}'s theorem.
\newblock {\em Bulletin of the Brazilian Mathematical Society}, 37(4), 2006.

\bibitem{csiszar-cover-choi:87}
I.~Csisz{\'a}r, T.M. Cover, and B.S. Choi.
\newblock Conditional limit theorems under {M}arkov conditioning.
\newblock {\em IEEE Trans. Inform. Theory}, 33(6):788--801, November 1987.

\bibitem{csiszar:book}
I.~Csisz{\'{a}}r and J.~K{\"{o}}rner.
\newblock {\em Information theory: Coding theorems for discrete memoryless
  systems}.
\newblock Academic Press, New York, 1981.

\bibitem{davies-simon:84}
E.B. Davies and B.~Simon.
\newblock Ultracontractivity and the heat kernel for {S}chr\"odinger operators
  and {D}irichlet {L}aplacians.
\newblock {\em J. Funct. Anal.}, 59(2):335--395, 1984.

\bibitem{definetti:31}
B.~De~Finetti.
\newblock Sul significato soggettivo della probabilita.
\newblock {\em Fundamenta Mathematicae}, 17(1):298--329, 1931.

\bibitem{definetti:37}
B.~De~Finetti.
\newblock La pr{\'e}vision: ses lois logiques, ses sources subjectives.
\newblock {\em Ann. Inst. Henri Poincar\'e}, 7(1):1--68, 1937.

\bibitem{dembo-cover-thomas:91}
A.~Dembo, T.M. Cover, and J.A. Thomas.
\newblock Information-theoretic inequalities.
\newblock {\em IEEE Trans. Inform. Theory}, 37(6):1501--1518, November 1991.

\bibitem{diaconis:77}
P.~Diaconis.
\newblock Finite forms of de {F}inetti's theorem on exchangeability.
\newblock {\em Synthese}, 36(2):271--281, 1977.

\bibitem{diaconis-freedman:80b}
P.~Diaconis and D.A. Freedman.
\newblock Finite exchangeable sequences.
\newblock {\em Ann. Probab.}, 8(4):745--764, 1980.

\bibitem{fritz:73}
J.~Fritz.
\newblock An information-theoretical proof of limit theorems for reversible
  {M}arkov processes.
\newblock In {\em Transactions of the Sixth Prague Conference on Information
  Theory, Statistical Decision Functions, Random Processes (Tech. Univ.,
  Prague, 1971; dedicated to the memory of Anton\'\i n \v Spa\v cek)}, pages
  183--197. Academia, Prague, 1973.

\bibitem{gavalakis-arxiv:clt}
L.~Gavalakis and I.~Kontoyiannis.
\newblock Entropy and the discrete central limit theorem.
\newblock {\em arXiv e-prints}, \texttt{2106.00514 [math.PR]}, June 2021.

\bibitem{gavalakis:21}
L.~Gavalakis and I.~Kontoyiannis.
\newblock An information-theoretic proof of a finite de {F}inetti theorem.
\newblock {\em Electron. Comm. Probab.}, 26:1--5, 2021.

\bibitem{GOR:79}
P.~Groeneboom, J.~Oosterhoff, and F.H. Ruymgaart.
\newblock Large deviation theorems for empirical probability measures.
\newblock {\em Ann. Probab.}, 7(4):553--586, August 1979.

\bibitem{gross:75}
L.~Gross.
\newblock Logarithmic {S}obolev inequalities.
\newblock {\em Amer. J. Math.}, 97(4):1061--1083, 1975.

\bibitem{guo:05}
D.~Guo, S.~Shamai, and S.~Verd{\'u}.
\newblock Mutual information and minimum mean-square error in {Gaussian}
  channels.
\newblock {\em IEEE Trans. Inform. Theory}, 51(4):1261--1282, April 2005.

\bibitem{hajek:58a}
J.~H{\'a}jek.
\newblock On a property of normal distributions of any stochastic process.
\newblock {\em Czechoslovak Math. J.}, 8(83):610--618, 1958.

\bibitem{hajek:58b}
J.~H{\'a}jek.
\newblock A property of {J}-divergences of marginal probability distributions.
\newblock {\em Czechoslovak Math. J.}, 8(3):460--463, 1958.

\bibitem{harremoes:01}
P.~Harremo{\"e}s.
\newblock Binomial and {P}oisson distributions as maximum entropy
  distributions.
\newblock {\em IEEE Trans. Inform. Theory}, 47(5):2039--2041, July 2001.

\bibitem{harremoes:09}
P.~Harremo{\"e}s and K.K. Holst.
\newblock Convergence of {M}arkov chains in information divergence.
\newblock {\em J. Theoret. Probab.}, 22(1):186--202, March 2009.

\bibitem{harremoesJK:07}
P.~Harremo{\"e}s, O.~Johnson, and I.~Kontoyiannis.
\newblock Thinning and the law of small numbers.
\newblock In {\em 2007 IEEE International Symposium on Information Theory
  (ISIT)}, pages 1491--1495, Nice, France, June 2007.

\bibitem{konto-H-J:10}
P.~Harremo{\"e}s, O.~Johnson, and I.~Kontoyiannis.
\newblock Thinning, entropy, and the law of thin numbers.
\newblock {\em IEEE Trans. Inform. Theory}, 56(9):4228--4244, 2010.

\bibitem{hewitt-savage:55}
E.~Hewitt and L.J. Savage.
\newblock Symmetric measures on {C}artesian products.
\newblock {\em Trans. Amer. Math. Soc.}, 80(2):470--501, 1955.

\bibitem{johnson:00}
O.~Johnson.
\newblock Entropy inequalities and the central limit theorem.
\newblock {\em Stoch. Proc. Appl.}, 88(2):291--304, 2000.

\bibitem{johnson:book}
O.~Johnson.
\newblock {\em Information theory and the central limit theorem}.
\newblock Imperial College Press, London, U.K., 2004.

\bibitem{johnson:07}
O.~Johnson.
\newblock Log-concavity and the maximum entropy property of the {P}oisson
  distribution.
\newblock {\em Stoch. Proc. Appl.}, 117(6):791--802, June 2007.

\bibitem{johnson-barron:04}
O.~Johnson and A.R. Barron.
\newblock Fisher information inequalities and the central limit theorem.
\newblock {\em Probab. Theory Related Fields}, 129(3):391--409, 2004.

\bibitem{jKm:13}
O.~Johnson, I.~Kontoyiannis, and M.~Madiman.
\newblock Log-concavity, ultra-log-concavity, and a maximum entropy property of
  discrete compound {Poisson} measures.
\newblock {\em Discrete Applied Mathematics}, 161(9):1232--1250, 2013.

\bibitem{johnson:00b}
O.~Johnson and Yu.M. Suhov.
\newblock Entropy and convergence on compact groups.
\newblock {\em J. Theoret. Probab.}, 13(3):843--857, 2000.

\bibitem{kemperman:69}
J.H.B. Kemperman.
\newblock On the optimum rate of transmitting information.
\newblock In {\em Probability and information theory}, pages 126--169.
  Springer-Verlag, 1969.

\bibitem{kendall:63}
D.G. Kendall.
\newblock Information theory and the limit-theorem for {M}arkov chains and
  processes with a countable infinity of states.
\newblock {\em Ann. Inst. Statist. Math.}, 15(1):137--143, May 1963.

\bibitem{kolmogorov:58}
A.N. Kolmogorov.
\newblock A new metric invariant of transitive dynamical systems and {L}ebesgue
  space automorphisms.
\newblock In {\em Dokl. Acad. Sci. USSR}, volume 119, pages 861--864, 1958.

\bibitem{konto-H-J:05}
I.~Kontoyiannis, P.~Harremo{\"e}s, and O.~Johnson.
\newblock Entropy and the law of small numbers.
\newblock {\em IEEE Trans. Inform. Theory}, 51(2):466--472, February 2005.

\bibitem{KM:14}
I.~Kontoyiannis and M.~Madiman.
\newblock Sumset and inverse sumset inequalities for differential entropy and
  mutual information.
\newblock {\em IEEE Trans. Inform. Theory}, 60(8):4503--4514, August 2014.

\bibitem{kullback:67}
S.~Kullback.
\newblock A lower bound for discrimination information in terms of variation.
\newblock {\em IEEE Trans. Inform. Theory}, 13(1):126--127, January 1967.

\bibitem{ledoux:96}
M.~Ledoux.
\newblock Isoperimetry and {G}aussian analysis.
\newblock In {\em Lectures on probability theory and statistics}, volume 1648
  of {\em Lecture Notes in Mathematics}, pages 165--294. Springer, Berlin,
  1996.

\bibitem{ledoux:97}
M.~Ledoux.
\newblock On {T}alagrand's deviation inequalities for product measures.
\newblock {\em ESAIM Probab. Statist.}, 1:63--87 (electronic), 1997.

\bibitem{ledoux:book}
M.~Ledoux.
\newblock {\em The concentration of measure phenomenon}.
\newblock American Mathematical Society, Providence, RI, 2001.

\bibitem{linnik:59}
Ju.V. Linnik.
\newblock An information-theoretic proof of the central limit theorem with
  {L}indeberg conditions.
\newblock {\em Theory Probab. Appl.}, 4:288--299, 1959.

\bibitem{courtade:16}
J.~Liu, T.A. Courtade, P.~Cuff, and S.~Verd{\'u}.
\newblock Brascamp-{L}ieb inequality and its reverse: {A}n information
  theoretic view.
\newblock In {\em 2016 IEEE International Symposium on Information Theory
  (ISIT)}, pages 1048--1052, Barcelona, Spain, July 2016.

\bibitem{madiman:07}
M.~Madiman and A.R Barron.
\newblock Generalized entropy power inequalities and monotonicity properties of
  information.
\newblock {\em IEEE Trans. Inform. Theory}, 53(7):2317--2329, July 2007.

\bibitem{madimanJK:07}
M.~Madiman, O.~Johnson, and I.~Kontoyiannis.
\newblock Fisher information, compound {Poisson} approximation, and the
  {Poisson} channel.
\newblock In {\em 2007 IEEE International Symposium on Information Theory
  (ISIT)}, pages 976--980, Nice, France, June 2007.

\bibitem{KM:16}
M.~Madiman and I.~Kontoyiannis.
\newblock Entropy bounds on abelian groups and the {Ruzsa} divergence.
\newblock {\em IEEE Trans. Inform. Theory}, 64(1):77--92, January 2016.

\bibitem{marton:96}
K.~Marton.
\newblock Bounding {$\overline d$}-distance by informational divergence: {A}
  method to prove measure concentration.
\newblock {\em Ann. Probab.}, 24(2):857--866, April 1996.

\bibitem{oconnell:TR}
N.~O'Connell.
\newblock Information-theoretic proof of the {H}ewitt-{S}avage zero-one law.
\newblock Technical report, Hewlett-Packard Laboratories, Bristol, U.K., June
  2000.

\bibitem{ornstein:70a}
D.S. Ornstein.
\newblock {B}ernoulli shifts with the same entropy are isomorphic.
\newblock {\em Advances in Mathematics}, 4(3):337--352, 1970.

\bibitem{ornstein:70c}
D.S. Ornstein.
\newblock Imbedding {B}ernoulli shifts in flows.
\newblock In {\em Contributions to ergodic theory and probability}, pages
  178--218. Springer, 1970.

\bibitem{ornstein:70b}
D.S. Ornstein.
\newblock Two {B}ernoulli shifts with infinite entropy are isomorphic.
\newblock {\em Advances in Mathematics}, 5(3):339--348, 1970.

\bibitem{raginsky:book}
M.~Raginsky and I.~Sason.
\newblock {\em Concentration of Measure Inequalities in Information Theory,
  Communications and Coding}.
\newblock Foundations and Trends in Communications and Information Theory. NOW
  Publishers, Boston, MA, 2018.

\bibitem{renyi:61}
A.~R{\'e}nyi.
\newblock On measures of entropy and information.
\newblock In {\em Proc. 4th Berkeley Sympos. Math. Statist. and Prob., Vol. I},
  pages 547--561. Univ. California Press, Berkeley, Calif., 1961.

\bibitem{sanov:57}
I.N. Sanov.
\newblock On the probability of large deviations of random variables.
\newblock {\em Mat. Sb.}, 42:11--44, 1957.
\newblock English translation in {\em Sel. Transl. Math. Statist. Probab.}
  (1961) {\bf 1}, 213-244.

\bibitem{shannon:48}
C.E. Shannon.
\newblock A mathematical theory of communication.
\newblock {\em Bell System Tech. J.}, 27(3):379--423, 623--656, 1948.

\bibitem{shannon:56}
C.E Shannon.
\newblock The bandwagon.
\newblock {\em IRE Trans. Inform. Theory}, 2(1):3, March 1956.

\bibitem{shimizu:75}
R.~Shimizu.
\newblock On {Fisher's} amount of information for location family.
\newblock In G.P. Patil, S.~Kotz, and J.K. Ord, editors, {\em A Modern Course
  on Statistical Distributions in Scientific Work}, pages 305--312. Springer,
  Dordrecth, Netherlands, 1975.

\bibitem{stam:59}
A.J. Stam.
\newblock Some inequalities satisfied by the quantities of information of
  {F}isher and {S}hannon.
\newblock {\em Information and Control}, 2(2):101--112, 1959.

\bibitem{tao:10}
T.~Tao.
\newblock Sumset and inverse sumset theory for {Shannon} entropy.
\newblock {\em Combinatorics, Probability and Computing}, 19(4):603--639, July
  2010.

\bibitem{topsoe:79}
F.~Tops{\o}e.
\newblock Information-theoretical optimization techniques.
\newblock {\em Kybernetika}, 15(1):8--27, 1979.

\bibitem{tulino:06}
A.M. Tulino and S.~Verd{\'u}.
\newblock Monotonic decrease of the non-{Gaussianness} of the sum of
  independent random variables: {A} simple proof.
\newblock {\em IEEE Trans. Inform. Theory}, 52(9):4295--4297, September 2006.

\bibitem{villani:book}
C.~Villani.
\newblock {\em Optimal transport: Old and new}.
\newblock Springer, Berlin, 2009.

\bibitem{voiculescu:I}
D.~Voiculescu.
\newblock The analogues of entropy and of {F}isher's information measure in
  free probability theory, {I}.
\newblock {\em Comm. Math. Phys.}, 155(1):71--92, 1993.

\bibitem{voiculescu:II}
D.~Voiculescu.
\newblock The analogues of entropy and of {F}isher's information measure in
  free probability theory, {II}.
\newblock {\em Inventiones Mathematicae}, 118(1):411--440, 1994.

\bibitem{voiculescu:III}
D.~Voiculescu.
\newblock The analogues of entropy and of {Fisher's} information measure in
  free probability theory {III}: {The} absence of {Cartan} subalgebras.
\newblock {\em Geometric \& Functional Analysis}, 6(1):172--199, 1996.

\bibitem{voiculescu:IV}
D.~Voiculescu.
\newblock The analogues of entropy and of {F}isher's information measure in
  free probability theory, {IV}: {M}aximum entropy and freeness, in free
  probability theory.
\newblock {\em Fields Inst. Commun.}, 12:293--302, 1997.

\bibitem{wolfowitz:61book}
J.~Wolfowitz.
\newblock {\em Coding theorems of information theory}.
\newblock Springer, Berlin, 1961.

\bibitem{yu:09}
Y.~Yu.
\newblock On the entropy of compound distributions on nonnegative integers.
\newblock {\em IEEE Trans. Inform. Theory}, 55(8):3645--3650, August 2009.

\end{thebibliography}

\def\cprime{$'$}

\end{document}